\documentclass{amsart}

\usepackage[utf8]{inputenc}
\usepackage{amsmath,amssymb,amsthm,bbm,centernot,xcolor,tikz,upgreek,esint,float,comment,mathrsfs,setspace}
\usepackage{hyperref}
\usepackage[shortlabels]{enumitem}


\renewcommand{\gamma}{\upgamma}

\newcommand{\comm}[1]{\ignorespaces}


\newtheorem*{Thm*}{Theorem}

\newtheorem*{Prop*}{Proposition}

\newtheorem*{Lem*}{Lemma}
\newtheorem{thm}{Theorem}[section]
\newtheorem*{thm*}{Theorem}
\newtheorem{prop}[thm]{Proposition}
\newtheorem{remark}[thm]{Remark}
\newtheorem*{prop*}{Proposition}
\newtheorem{cor}[thm]{Corollary}
\newtheorem{lem}[thm]{Lemma}
\newtheorem*{lem*}{Lemma}
\newtheorem*{Cla*}{Claim}
\theoremstyle{remark}

\theoremstyle{definition}
\newtheorem{Def}[thm]{Definition}
\newtheorem*{Def*}{Definition}

\newtheorem*{rem*}{Remark}

\numberwithin{equation}{section}

\title{On the equivalence of derivatives for maps between Carnot groups}

\author{Scott Zimmerman}

\date{\today}

\address{Department of Mathematics, The Ohio State University, 100 Math Tower (MW) 231 W 18th Ave, Columbus, OH}
\email{zimmerman.416@osu.edu}

\begin{document}
\maketitle

\begin{abstract}
This paper gives an alternate, elementary proof of a result of Magnani: maps between Carnot groups that preserve horizontal curves and are continuously differential in horizontal directions in the Euclidean sense are continuously Pansu differentiable. This proof contains primarily Euclidean arguments and also reproves a version of Magnani's mean value estimate for continuously Pansu differentiable maps.
\end{abstract}


\section{Introduction}

Suppose $\mathbb{G}$ and $\hat{\mathbb{G}}$ are Carnot groups.
Suppose also that $\Omega \subset \mathbb{G}$ is open and $f:\Omega \to \hat{\mathbb{G}}$ is a map that preserves horizontal curves.
Pansu introduced a natural notion of differentiability for maps between Carnot groups in \cite{Pansu}.
Since Carnot groups can be viewed as Euclidean spaces with a particular Lie group structure, it also makes sense to discuss the classical Euclidean differentiability of maps between Carnot groups.
These notions of differentiability are not the same.
In particular, there are continuously Pansu differentiable maps between Carnot groups that are not Euclidean differentiable in non-horizontal directions \cite{MagCoarea}. 

When $\Omega = \mathbb{G} = \hat{\mathbb{G}}$, Warhurst \cite{Warhurst} proved that $f$ is continuously Pansu differentiable if and only if it is a Euclidean-$C^1$ diffeomorphism.
In the more general setting without these added restrictions, Magnani proved that 
$f$ is continuously Pansu differentiable if and only if it is 
continuously Euclidean differentiable in the horizontal directions of $\hat{\mathbb{G}}$. See Theorem~1.1 in \cite{MagnaniToward}.
The forward implication of this result is straightforward.
It is the reverse implication that is the focus of this paper;
 we provide an alternate, elementary proof.
Magnani argued at the level of Lie algebras, while our proof below will be mostly Euclidean in flavor.

Magnani also provided a mean value estimate on the Pansu difference quotients of continuously Pansu differentiable maps. See Theorem~1.2 in \cite{MagnaniToward}. In order to prove our main goal, we also establish Magnani's estimate (with a weaker exponent on the modulus of continuity).

Before stating the theorem we aim to prove, let us first establish some notation.
For any open set $\Omega \subset \mathbb{G}$ and compact $A \subset \Omega$, define $t_A = \frac{\text{dist}(A, \mathbb{G} \setminus \Omega)}{2k_0 C_0\sup_{ \Vert \xi \Vert = 1} |\xi|}$ where $k_0$ and $C_0$ are the constants depending only on $\mathbb{G}$ from Remark~\ref{r-strat}, and set $\mathcal{A} = \{ y \in \mathbb{G} : \text{dist}(y,A) \leq \tfrac12 \text{dist} (A,\mathbb{G} \setminus \Omega)\}$.
In the following, we will use $\hat{d}$ and $\hat{n}$ to respectively denote the metric on and horizontal dimension of $\hat{\mathbb{G}}$.

\begin{thm}
    \label{t-actualmain}
    Suppose $\Omega \subset \mathbb{G}$ is open. If $f: \Omega \to \hat{\mathbb{G}}$ preserves horizontal curves and $f_i$ is Euclidean-$C^1$ on $\Omega$ for $i \leq \hat{n}$
    then,
    for any compact set $A \subset \Omega$, there is a constant $C > 0$ satisfying the following:
    for every $x \in A$ and $\xi \in \partial B(0,1)$, there is some $z(x,\xi) \in \hat{\mathbb{G}}$ such that
    \begin{equation}
        \label{e-main1}
        \hat{d}(R(x,\xi;t),z(x,\xi)) \leq C\omega(t)^{1/s^{k_0}}
    \end{equation}
    for $t \in \left(0,t_A\right]$
    where $\omega$ is a Euclidean modulus of continuity on $\mathcal{A}$ for each $\nabla f_i$ with $i \leq \hat{n}$.
        If we further write $\xi = \xi_{k_0} \cdots \xi_1$ as in Remark~\ref{r-strat}, then we have
    \begin{equation}
        \label{e-main2}
    z(x,\xi) = D_{\xi_{k_0}}f(x) D_{\xi_{k_0 -1}}f(x) \cdots D_{\xi_1}f(x),
    \end{equation} 
    where
\begin{align}
D_{\xi_i} f(x)
&=
\left( \left. \tfrac{d}{dt} f_1(x \delta_t(\xi_i)) \right|_{t=0},\dots,\left. \tfrac{d}{dt} f_{\hat{n}}(x \delta_t(\xi_i)) \right|_{t=0},0,\dots,0 \right)
\nonumber
\\
&=\lambda_i \left( \nabla f_1(x) \cdot X_{j_i}(x), \dots, \nabla f_{\hat{n}}(x) \cdot X_{j_i}(x),0,\dots,0 \right).\label{e-otherform}
\end{align}
\end{thm}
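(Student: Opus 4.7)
The plan is to establish \eqref{e-main1} and \eqref{e-main2} simultaneously by induction on the step $i$ of the stratification, handling the factors of the decomposition $\xi=\xi_{k_0}\cdots\xi_1$ one at a time and combining them via the group law. Throughout, I interpret $R(x,\xi;t)$ as the Pansu difference quotient $\delta_{1/t}(f(x)^{-1}f(x\delta_t(\xi)))$, consistent with the shape of the conclusion.

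\textbf{Base case (horizontal directions).} For a layer-$1$ element $\xi_1$, the curve $\gamma(t)=x\delta_t(\xi_1)$ is horizontal in $\mathbb{G}$. Since $f$ preserves horizontal curves, $f\circ\gamma$ is horizontal in $\hat{\mathbb{G}}$, hence determined by its first $\hat n$ components, each of which is Euclidean-$C^1$ by hypothesis. Integrating $\nabla f_i$ along $\gamma$ and comparing with the value at $x$ using the modulus $\omega$ (via the standard Euclidean mean value theorem) gives
\[
\hat d\bigl(R(x,\xi_1;t),D_{\xi_1}f(x)\bigr)\lesssim \omega(t),
\]
which matches the first layer of \eqref{e-otherform}.

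\textbf{Inductive step (higher layers via commutators).} Assuming the estimate for layers $1,\dots,i-1$, I would write a layer-$i$ element as a Lie bracket $\xi_i=[\eta,\zeta]$ with $\eta\in V_1$ and $\zeta\in V_{i-1}$. The Baker--Campbell--Hausdorff formula in the Carnot setting gives
\[
\delta_t(\xi_i)=\delta_t(\eta)\,\delta_t(\zeta)\,\delta_t(\eta)^{-1}\,\delta_t(\zeta)^{-1}\cdot(\text{higher-order error}),
\]
where the four factors have effective sizes $t,\,t^{i-1},\,t,\,t^{i-1}$ respectively. Applying $f$ and invoking the base case on the horizontal factors and the inductive hypothesis on the layer-$(i-1)$ factors gives an approximate factorization of $f(x\delta_t(\xi_i))$ as a commutator in $\hat{\mathbb{G}}$. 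The Lie-algebraic compatibility encoded in the two equalities of \eqref{e-otherform} forces this commutator, after rescaling by $\delta_{1/t}$, to equal $D_{\xi_i}f(x)$.

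\textbf{Assembly and exponent tracking.} Given the layer-wise estimates, the factorization $x\delta_t(\xi)=x\delta_t(\xi_{k_0})\cdots\delta_t(\xi_1)$ reduces \eqref{e-main2} to iterating the one-factor case and pushing the pieces through the group law of $\hat{\mathbb{G}}$; this is where the constants $k_0,C_0$ from Remark~\ref{r-strat} and the definition of $t_A$ are used to keep every intermediate point inside $\mathcal{A}$. The main obstacle I anticipate is tracking the exponent on $\omega$. Each pass through the commutator argument compares a layer-$i$ displacement of effective size $t^i$ to products of horizontal displacements of size $t$, and the homogeneous rescaling by $\delta_{1/t}$ amplifies the error in a way that forces a fractional power of $\omega$. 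Iterating this loss once per layer across all $k_0$ layers produces the exponent $1/s^{k_0}$, with $s$ a universal integer coming from the commutator-scaling ratio rather than from $f$. Verifying that the universal constants absorb all the accumulated loss, and that no hidden $x$-dependence appears, is the technical heart of the argument.
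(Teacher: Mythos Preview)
Your proposal rests on a misreading of the decomposition $\xi=\xi_{k_0}\cdots\xi_1$. By Remark~\ref{r-strat} (and as confirmed by the formula \eqref{e-otherform} in the statement itself), every factor $\xi_k=\lambda_k e_{j_k}$ with $j_k\le n$ is a \emph{horizontal} (layer-1) element; the index $k$ is just an enumeration, not a layer index. There is no ``layer-$i$ element $\xi_i$'' to handle, so your entire inductive step via commutators is addressing a problem that does not arise. What the paper actually does is: (i) use Pansu's multiplicative trick to write $R(x,\xi;t)$ as a product of $k_0$ difference quotients $R(x_k(t),\xi_k;t)$, each in a horizontal direction; (ii) apply Lemma~\ref{l-approx} to the horizontal curve $\tau\mapsto f(x_k(t)\delta_\tau(\xi_k))$ in $\hat{\mathbb G}$ to get $\hat d(R(x_k(t),\xi_k;t),D_{\xi_k}f(x_k(t)))\lesssim\omega(t)^{1/s}$; and (iii) combine the $k_0$ factor estimates via Corollary~\ref{c-nice}, which costs a further exponent $1/s^{k_0-1}$ because right translation in a Carnot group is only $1/s$-H\"older (Proposition~\ref{p-rightmult}). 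The exponent $1/s^{k_0}$ thus comes from the non-invariance of $\hat d$ under right multiplication when assembling $k_0$ horizontal factors, not from iterating through layers of the stratification.

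Your base case is in the right spirit but also has a gap: controlling only the first $\hat n$ components of $R(x,\xi_1;t)$ via the Euclidean mean value theorem does \emph{not} give $\hat d(R(x,\xi_1;t),D_{\xi_1}f(x))\lesssim\omega(t)$, because the higher components of $f(x\delta_t(\xi_1))$ are nonzero and the homogeneous metric takes $d_i$-th roots of them. This is exactly what Lemma~\ref{l-approx} is for: it exploits the horizontality of $f\circ\gamma$ in $\hat{\mathbb G}$ to bound the higher components inductively, and the output is $\omega(t)^{1/s}$, not $\omega(t)$.
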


The bound of $t_A$ exists to ensure that $R(x,\xi;t)$ is well-defined and well controlled (i.e. $x \delta_t (\xi) \in \mathcal{A}$).
This result can be interpreted as follows. Inequality \eqref{e-main1} implies that $f$ is Pansu differentiable and that its Pansu difference quotients converge uniformly at a rate controlled by the uniform convergence of the Euclidean derivative. This is the special case of the mean value estimate (Theorem~1.2 in \cite{MagnaniToward}) mentioned above. Moreover, \eqref{e-main2} and \eqref{e-otherform} give an explicit expression of the Pansu derivative in this setting.
While this formulation is likely not new, a relevant reference was not readily found.
Since each $D_{\xi_i}f$ is continuous, we may conclude the following.

\begin{cor}[\cite{MagnaniToward}]
        Suppose $\Omega \subset \mathbb{G}$ is open and $f: \Omega \to \hat{\mathbb{G}}$ preserves horizontal curves.
    If $f_i$ is Euclidean-$C^1$ on $\Omega$ for $i \leq \hat{n}$, then $f$ is continuously Pansu differentiable.
\end{cor}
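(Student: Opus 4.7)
The plan is to deduce the corollary almost directly from Theorem~\ref{t-actualmain}, which carries virtually all the weight. First, I would observe that \eqref{e-main1} establishes Pansu differentiability pointwise: fixing any $x \in \Omega$ and a compact $A \subset \Omega$ containing a neighborhood of $x$, the bound $\hat{d}(R(x,\xi;t),z(x,\xi)) \leq C\omega(t)^{1/s^{k_0}} \to 0$ as $t \to 0^+$ shows that the Pansu difference quotient at $x$ converges, uniformly in $\xi \in \partial B(0,1)$. Thus $\xi \mapsto z(x,\xi)$ is the value of the Pansu derivative of $f$ at $x$ on the horizontal unit sphere and extends uniquely to a graded homomorphism $Df(x):\mathbb{G} \to \hat{\mathbb{G}}$.

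Next, for continuity of $x \mapsto Df(x)$, I would invoke the explicit formula \eqref{e-otherform}. For each $\xi_i$, the map
\[
x \;\longmapsto\; D_{\xi_i} f(x) = \lambda_i \bigl(\nabla f_1(x)\cdot X_{j_i}(x),\, \ldots,\, \nabla f_{\hat{n}}(x)\cdot X_{j_i}(x),\,0,\ldots,0\bigr)
\]
is continuous, being a polynomial combination of the continuous functions $\nabla f_1,\ldots,\nabla f_{\hat{n}}$ with the smooth coefficients of the left-invariant vector field $X_{j_i}$. By \eqref{e-main2} and continuity of multiplication in $\hat{\mathbb{G}}$, the composition $x \mapsto z(x,\xi) = D_{\xi_{k_0}}f(x)\cdots D_{\xi_1} f(x)$ is continuous for every fixed $\xi \in \partial B(0,1)$.

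Finally, since a graded homomorphism between Carnot groups is determined by its restriction to the horizontal layer $V_1$ (which Lie-generates the whole algebra), continuity of $x \mapsto Df(x)\xi$ for all horizontal $\xi$ is equivalent to continuity of $x \mapsto Df(x)$ in the finite-dimensional space of graded homomorphisms. I would therefore conclude that $Df$ is continuous on $\Omega$, as required by the definition of continuous Pansu differentiability.

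I do not anticipate any real obstacle here: Theorem~\ref{t-actualmain} already provides both the existence of the Pansu derivative (via \eqref{e-main1}) and an explicit formula (via \eqref{e-main2} and \eqref{e-otherform}) from which continuity is immediate. The only point worth a careful sentence is the reduction to continuity on horizontal vectors, and this is a standard consequence of the fact that $V_1$ generates $\mathfrak{g}$, so that the higher-layer components of a graded homomorphism are determined (polynomially, via Baker--Campbell--Hausdorff) by its action on $V_1$.
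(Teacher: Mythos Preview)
Your proposal is correct and matches the paper's approach exactly: the paper states the corollary immediately after Theorem~\ref{t-actualmain} with the single justification ``Since each $D_{\xi_i}f$ is continuous, we may conclude the following,'' which is precisely your argument via \eqref{e-main1}, \eqref{e-main2}, and \eqref{e-otherform}. Your final paragraph on reducing to horizontal $\xi$ is a harmless elaboration---note that \eqref{e-main2} already gives continuity of $x\mapsto z(x,\xi)$ for every $\xi\in\partial B(0,1)$, so the reduction is not strictly needed under the paper's definition of continuous Pansu differentiability.
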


The main new contribution in this paper and the tool underpinning the proof of Theorem~\ref{e-main1} is Lemma~\ref{l-approx}.
This lemma shows that smooth horizontal curves are well approximated by ``horizontal rays'' in the same way that Euclidean curves are well approximated by tangent lines. 

The paper is outlined as follows.
Preliminary information about Carnot groups and moduli of continuity are given in Section~\ref{s2}.
Section~\ref{s3} is devoted to the approximation lemma discussed in the previous paragraph. The proof of Theorem~\ref{e-main1} is the primary content of Section~\ref{s4}.

\section{Preliminaries} \label{s2}

\subsection{Carnot groups}
Given a positive integer $s$, suppose $\mathbb{G}$ is
a connected, simply connected Lie group 
and $V_1,\dots, V_s$ are non-zero subspaces of its associated Lie algebra $\mathfrak{g}$.
We say that $\mathbb{G}$
is
a \textit{step-$s$ Carnot group} if $\mathfrak{g}$ is stratified in the following
sense:
$$
\mathfrak{g} = V_1 \oplus \cdots \oplus V_s, \quad
[V_1,V_i]=V_{i+1} \text{ for } i=1,\dots, s-1, 
\quad 
[V_1,V_s] = \{0\}.
$$
The \textit{horizontal subbundle} $H$ of $T \mathbb{G}$ is defined so that $H_0=V_1$, and, for any $p \in \mathbb{G}$, we set $H_p = (L_p)_* (H_0)$ where $L_p(x) = px$ is the left translation operation. Set $N := \text{dim } \mathfrak{g}$ and $n := \text{dim } V_1$.
We call $n$ the {\em horizontal dimension} of $\mathbb{G}$.

We select a basis $\{X_1,\dots, X_N\}$ of $\mathfrak{g}$ adapted to the stratification in the sense that
$
\{X_{(\sum^{i-1}_{j=1} \text{dim } V_j)+1} , \dots , X_{\sum^{i}_{j=1} \text{dim } V_j}\}$
 forms a basis of $V_i$ for each $i \in \{1,\dots,s\}$.
For any $x \in \mathbb{G}$, then, we can uniquely
write $x = \text{exp}(x_1X_1 + \cdots + x_NX_N)$ for some $(x_1,\dots, x_N) \in \mathbb{R}^N$ via the exponential map $\text{exp} : \mathfrak{g} \to \mathbb{G}$. This allows us to identify $\mathbb{G}$ with $\mathbb{R}^N$ via the relationship
$x \leftrightarrow (x_1,\dots, x_N)$. 
Denote by $|\cdot|$ the Euclidean norm in $\mathbb{G} = \mathbb{R}^N$ determined by our choice of basis above.
Note also that, under this identification, we have $x^{-1} = -x$ for any $x \in \mathbb{G}$.

Another property that follows from the stratified structure of a Carnot group is the existence of automorphic dilations.
Note that, for any coordinate $i \in \{ 1,\dots,N\}$, there is a positive integer $d_i$ satisfying
$$
\sum^{d_i-1}_{j=1} \text{dim } V_j < i \leq \sum^{d_i}_{j=1} \text{dim } V_j.
$$
We call $d_i$ the {\em degree} of $i$. Intuitively, this describes the ``layer'' of the stratification in which the coordinate $i$ lives.
Note in particular that, if $i \leq n$, then $d_i = 1$.
For any $t > 0$, define the {\em dilation} $\delta_t$ as
$$
\delta_t(x) = (t^{d_1}x_1,t^{d_2}x_2,\dots,t^{d_N}x_N).
$$
This forms a one parameter family of automorphisms in the sense that $\delta_{t_1} \circ \delta_{t_2} = \delta_{t_1 t_2}$ for $t_1,t_2 > 0$.
We can naturally extend this one parameter family to $t=0$ so that $\delta_0(x) = 0$, but this is clearly no longer an automorphism.

The following well known fact follows from the stratified structure of $\mathbb{G}$. See, for example, Lemma~1.40 in \cite{FolSteinBook}.
This fact allows us to ``build'' any point in $\mathbb{G}$ out of boundedly many points from the first layer.

\begin{remark}
    \label{r-strat}
    There is a positive integer $k_0$ and a constant $C_0>0$ such that, 
    for each $\xi \in \mathbb{G}$, we may write $\xi = \xi_{k_0} \cdots \xi_1$ where, for $k = 1,\dots,k_0$, we have 
    $\xi_k = \lambda_k e_{j_k}$ for some $j_k \in \{1,\dots, n\}$ and $\lambda_k \in \mathbb{R}$ with $|\lambda_k| \leq C_0 |\xi|$.
\end{remark}

Since the Lie algebra is nilpotent, one may explicitly compute the group operation in $\mathbb{G}$ from the Lie bracket combinations in $\mathfrak{g}$ via the Baker–Campbell–Hausdorff formula (see Definition~2.2.11 in \cite{ItalianBook}).
The following properties of the group law in $\mathbb{G}$ follow from this computation and are well known. See, for example, Proposition~2.2.22 in \cite{ItalianBook}.
\begin{prop}
    \label{p-structure}
    We may write the group law as $xy = x+y+Q(x,y)$ for some polynomial $Q=(Q_1,\dots,Q_N)$ where
    \begin{enumerate}
        \item $Q_1=\dots=Q_n=0$,
        \item for $n < i \leq N$, the real valued polynomial $Q_i(x,y)$ may be written as a sum of terms each of which contains a factor of the form $(x_jy_\ell - x_\ell y_j)$ for some $1 \leq \ell < i$ and $1 \leq j < i$,
        \item  $Q_i$ is homogeneous of degree $d_i$ with respect to dilations i.e. $Q_i(\delta_t (x),\delta_t(y)) = t^{d_i} Q_i(x,y)$ for all $x,y \in \mathbb{G}$ and $t > 0$,
        \item if the coordinate $x_i$ has degree $d_i \geq 2$, then $Q_i(x, y)$ depends only on the coordinates of $x$ and $y$ which have degree strictly less than $d_i$.
    \end{enumerate}
\end{prop}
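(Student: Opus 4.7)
The plan is to derive all four properties from the Baker--Campbell--Hausdorff (BCH) formula combined with the stratified structure of $\mathfrak{g}$. Since $\mathfrak{g}$ is nilpotent of step $s$, the BCH series terminates and expresses $\log(\exp(X)\exp(Y))$ as a finite sum of nested brackets in $X$ and $Y$. Writing $X = \sum_i x_i X_i$ and $Y = \sum_i y_i X_i$ in the adapted basis and using the identification $\mathbb{G} \leftrightarrow \mathbb{R}^N$ via $\exp$, this yields $xy = x + y + Q(x,y)$ where the polynomial $Q$ collects all the nonzero bracket terms.

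For property (1), I would note that every nontrivial iterated bracket of elements of $\mathfrak{g}$ lies in $[\mathfrak{g},\mathfrak{g}] = V_2 \oplus \cdots \oplus V_s$, so it contributes nothing to the first $n$ coordinates. For (3), I would observe that the linear map on $\mathfrak{g}$ sending $X \in V_k$ to $t^k X$ respects the bracket since $[V_i, V_j] \subset V_{i+j}$, so it is a Lie algebra automorphism; its exponential is precisely $\delta_t$, making $\delta_t$ a Lie group automorphism. The identity $\delta_t(xy) = \delta_t(x)\delta_t(y)$ then gives $Q_i(\delta_t x, \delta_t y) = t^{d_i} Q_i(x,y)$ upon comparing $i$-th coordinates.

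For (2), the key observation is that in the right-nested form of the BCH expansion, each nonvanishing bracket monomial has an innermost bracket of the form $[X,Y]$ or $[Y,X]$, since $[X,X] = [Y,Y] = 0$. Expanding that innermost bracket in the basis and grouping pairs yields
\[
[X,Y] = \sum_{j < \ell} (x_j y_\ell - x_\ell y_j)[X_j,X_\ell],
\]
which produces the required antisymmetric factor in every surviving term (the outer nested brackets act as linear operators on this expression and thus preserve the factor). The index constraint $j, \ell < i$ in (2) and property (4) both follow from a degree count: any iterated bracket contributing to the $V_{d_i}$ component must be built from basis elements whose degrees sum to $d_i$, forcing each participating degree to be strictly less than $d_i$, and hence (since the basis is ordered by increasing degree) each participating index to be strictly less than $i$. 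The main obstacle I anticipate is making the antisymmetric-factor argument watertight for deeply nested brackets; the trick is to apply antisymmetry at the innermost bracket only, rather than distributing it across all levels of the nest, so that the outer structure simply multiplies rather than interferes with the factor.
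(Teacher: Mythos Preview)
The paper does not actually prove this proposition; it is stated as a well-known structural fact and attributed to Proposition~2.2.22 in \cite{ItalianBook}, with the Baker--Campbell--Hausdorff formula cited just before the statement. Your sketch is correct and is essentially the standard argument one finds in that reference: the nilpotent BCH series, written in right-nested (Dynkin) form, has every nonzero commutator ending in $\pm[X,Y]$; expanding only that innermost bracket as $\sum_{j<\ell}(x_j y_\ell - x_\ell y_j)[X_j,X_\ell]$ and treating the outer $\mathrm{ad}$'s as linear maps preserves the antisymmetric factor through to each coordinate, giving (2). The grading $[V_j,V_k]\subset V_{j+k}$ makes $\delta_t$ a Lie algebra automorphism, yielding (3), and the degree count you describe---that any monomial contributing to $Q_i$ has coordinate degrees summing to $d_i$ with at least two factors---forces each participating degree below $d_i$, which simultaneously gives (4) and, via the adapted ordering of the basis, the index bound $j,\ell<i$ in (2). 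Your anticipated obstacle is not a real one: applying bilinearity at the innermost bracket alone is exactly the right move, and the outer $\mathrm{ad}(X)$, $\mathrm{ad}(Y)$ only append further linear factors in the $x$'s and $y$'s without disturbing the antisymmetric one.
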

Note in particular that the group operation is simply linear in the first layer and, in any higher layer, depends only on the previous ones.
This is expected due to the stratified structure of $\mathfrak{g}$.

A metric $d$ on $\mathbb{G}$ is called \textit{left invariant} if $d(xp,xq) = d(p,q)$  and is called \textit{homogeneous} if $d(\delta_t(p),\delta_t(q)) = t d(p,q)$ for $x,p,q \in \mathbb{Q}$ and $t>0$.
All homogeneous, left invariant metrics on $\mathbb{G}$ are bi-Lipschitz equivalent to one another due to the compactness of their unit spheres.
For any such metric we also have the following H\"{o}lder-type equivalence (and thus topological equivalence) with the Euclidean norm on $\mathbb{R}^N$.
See Proposition~5.15.1 in \cite{ItalianBook}.

\begin{prop}
\label{p-compact}
    For any compact set $K \subset \mathbb{G}$, there is a constant $C_K \geq 1$ such that
    $$
    C_K^{-1} |x-y| \leq d(x,y) \leq C_K |x-y|^{1/s}.
    $$
\end{prop}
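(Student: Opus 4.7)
The plan is to reduce the claim to a comparison of $\rho(z):=d(0,z)$ with a standard homogeneous quasi-norm on $\mathbb{G}$, and then to compare that quasi-norm with the Euclidean norm on a bounded neighborhood of the origin.

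First, by left-invariance $d(x,y)=\rho(x^{-1}y)$. Since the group law is polynomial by Proposition~\ref{p-structure}, both $\Phi(x,y):=x^{-1}y$ and $\Psi(x,z):=xz$ are smooth maps that vanish on the diagonal and the zero section respectively; on the compact set $K\times K$ they are therefore Lipschitz in their second argument, uniformly in the first, which yields $|x^{-1}y|\asymp |x-y|$ with constants depending only on $K$. It suffices therefore to produce $c,C>0$ such that $c|z|\leq \rho(z)\leq C|z|^{1/s}$ for every $z$ in a fixed bounded Euclidean neighborhood $\widetilde K$ of $0$ containing $\{x^{-1}y:x,y\in K\}$.

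Next, I would introduce the quasi-norm $\nu(z):=\max_{1\leq i\leq N}|z_i|^{1/d_i}$, which is Euclidean-continuous, vanishes only at $0$, and satisfies $\nu(\delta_t z)=t\,\nu(z)$. A standard scaling argument then gives $\rho(z)\asymp \nu(z)$ on $\mathbb{G}\setminus\{0\}$: the level set $S=\{\nu=1\}$ is Euclidean-compact and $\rho$ is continuous and strictly positive on $S$, hence bounded there between positive constants $a\leq b$; the homogeneity $\rho(\delta_t z)=t\,\rho(z)$ together with $\delta_{1/\nu(z)}(z)\in S$ yields $a\,\nu(z)\leq \rho(z)\leq b\,\nu(z)$ for all $z\neq 0$.

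Finally, I would compare $\nu$ to $|\cdot|$ on $\widetilde K$. For the upper bound, when $|z|\leq 1$ one has $|z_i|^{1/d_i}\leq |z|^{1/d_i}\leq |z|^{1/s}$ since $d_i\leq s$, so $\nu(z)\leq |z|^{1/s}$. For the lower bound, choosing $j$ with $|z_j|\geq |z|/\sqrt{N}$ and using that $t\mapsto t^{1/d_j}$ is bounded below by a positive linear function on any bounded interval, one obtains $\nu(z)\geq |z_j|^{1/d_j}\gsim |z|$ on $\widetilde K$. Combining these bounds with $\rho\asymp\nu$ and $|x-y|\asymp |x^{-1}y|$ yields the proposition. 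The main technical point is the $1/s$ exponent in the upper bound: it comes from the top-stratum coordinates of $z$, which, although of Euclidean size at most $|z|$, contribute $|z|^{1/s}$ to the homogeneous scale $\nu(z)$. The lower bound and the reduction to $\rho\asymp \nu$ are both routine scaling arguments, so the bookkeeping of degrees in the upper bound is the step that most clearly requires the stratified structure.
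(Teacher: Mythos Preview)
Your argument is correct and is in fact the standard proof of this comparison; the paper itself does not prove the proposition but simply cites Proposition~5.15.1 in \cite{ItalianBook}. Since in this paper the metric is chosen to be $d(x,y)=\Vert x^{-1}y\Vert$ with $\Vert z\Vert=\max_i \mu_i |z_i|^{1/d_i}$, your steps 3--4 (showing $\rho\asymp\nu$) are essentially tautological here, so the content really is the polynomial comparison $|x^{-1}y|\asymp|x-y|$ on compacta together with the elementary bounds on $\nu$ versus $|\cdot|$.

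Two small points worth tightening. First, your phrase ``$\Psi$ vanishes on the zero section'' is not quite right: $\Psi(x,0)=x$, not $0$. What you want (and what you use) is that $\Psi$ is Lipschitz in its second argument on compacta, so that $|y-x|=|\Psi(x,x^{-1}y)-\Psi(x,0)|\lesssim|x^{-1}y|$. Second, the upper bound $\nu(z)\leq|z|^{1/s}$ as written only holds for $|z|\leq 1$; on the full bounded set $\widetilde K$ you pick up a constant depending on $\diam\widetilde K$, which is harmless but should be said. Neither issue affects the validity of the argument.
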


We will choose a particular left invariant, homogeneous metric $d$ on $\mathbb{G}$ for our purposes below.
For $x \in \mathbb{G}$, set
$$
\Vert x \Vert := \max_{i=1,\dots,N} \mu_i |x_i|^{1/d_i}
$$
where the constants $\mu_i > 0$ are chosen so that $\Vert \cdot \Vert$ satisfies the triangle inequality. Such a choice is always possible; see \cite{Guivarch}.
Define $d(x,y) = \Vert x^{-1}y \Vert$ for $x,y \in \mathbb{G}$.
This metric is left invariant and homogeneous by its definition.

Suppose $\gamma:[a,b] \to \mathbb{G}$ is absolutely continuous as a curve in $\mathbb{R}^N$.
We say that $\gamma$ is {\em horizontal}
if $\gamma'(t) \in H_{\gamma(t)}$ for almost every $t \in [a,b]$.
In particular, this means
    \begin{equation}
        \label{e-horizniceintro}
    \gamma'(t) = 
    \sum_{j=1}^n  \gamma_j'(t) X_j(\gamma(t))
    =
    \sum_{j=1}^n \gamma_j'(t) e_j  + 
    \sum_{j=1}^n \gamma_j'(t) \left. \frac{\partial Q(\gamma(t),y)}{\partial y_j}\right|_{y=0}.
    \end{equation}

Suppose $\Omega \subset \mathbb{G}$ is open.
A map $f: \Omega \to \hat{\mathbb{G}}$ is said to {\em preserve horizontal curves} if $f \circ \gamma$ is a horizontal curve in $\hat{\mathbb{G}}$ for every horizontal curve $\gamma$ in $\Omega$.

\subsection{Bounding right translation}
While the metric $d$ is not invariant under right multiplication, we do have the following estimate.
\begin{prop}
    \label{p-rightmult}
    Suppose $M > 0$ and $a,b,c \in \mathbb{G}$ satisfy $\max\{|a|,|b|,|c|\} \leq M$. 
    There is a constant $C>0$ depending only on $M$ and $\mathbb{G}$ so that
    $$
    d(ac,bc) \leq C d(a,b)^{1/s}.
    $$
\end{prop}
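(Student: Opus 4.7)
The plan is to reduce the problem via left-invariance of $d$ and the polynomial/homogeneous structure of the group law. I would begin by writing $d(ac,bc) = \|(ac)^{-1}(bc)\| = \|c^{-1}wc\|$ where $w := a^{-1}b$, and noting that by definition $\|w\| = d(a,b)$. Since the group law $xy=x+y+Q(x,y)$ is polynomial with $|a|,|b| \leq M$, the Euclidean norm $|w|$ is bounded by a constant depending only on $M$ and $\mathbb{G}$. Thus it suffices to prove $\|c^{-1}wc\| \leq C\|w\|^{1/s}$ whenever $|c|$ and $|w|$ are bounded.

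Next, I would set $v := c^{-1}wc$ and, by iterated applications of Proposition~\ref{p-structure}, view each coordinate $v_i$ as a polynomial in the components of $(c,w) \in \mathbb{R}^{2N}$. Two structural observations then drive the estimate. First, since $c^{-1}\cdot 0 \cdot c = 0$, the polynomial $v_i$ vanishes identically when $w=0$, so every monomial appearing in $v_i$ must carry at least one factor from the $w$-variables. Second, conjugation commutes with dilation, $\delta_t(c^{-1}wc) = (\delta_t c)^{-1}(\delta_t w)(\delta_t c)$, forcing $v_i(\delta_t c,\delta_t w) = t^{d_i}v_i(c,w)$. Every monomial $c^\alpha w^\beta$ in $v_i$ therefore satisfies the weighted-degree identity $\sum_j \alpha_j d_j + \sum_k \beta_k d_k = d_i$, with $|\beta|\geq 1$.

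Now I would unwind: from the definition of the norm, $|w_k| \leq \mu_k^{-d_k}\|w\|^{d_k}$, so combining with $|c|$ bounded and the weighted-degree identity, each monomial $c^\alpha w^\beta$ is majorized by $C\|w\|^{m}$ where $m := \sum_k \beta_k d_k$ satisfies $1\leq m\leq d_i$. Since $\|w\|$ itself lies in a bounded set, these terms collapse to $|v_i| \leq C\|w\|$, giving $\mu_i|v_i|^{1/d_i} \leq C\|w\|^{1/d_i}$. Using $d_i\leq s$ together with boundedness of $\|w\|$, the exponent mismatch between $1/d_i$ and $1/s$ is absorbed into the constant, yielding $\mu_i|v_i|^{1/d_i} \leq C\|w\|^{1/s}$. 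Taking the maximum over $i$ gives $\|v\| \leq C\|w\|^{1/s}$, which is the desired inequality.

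The main (mild) obstacle will be the homogeneity bookkeeping: one needs to convince oneself both that vanishing at $w=0$ forces $|\beta|\geq 1$ in every monomial, and that dilation homogeneity pins the total weighted degree of each monomial to exactly $d_i$. Once these two facts are in hand no Baker--Campbell--Hausdorff computation is required, and the estimate follows by inspection. The loss of the power $1/s$ (as opposed to $1$) is intrinsic and arises precisely from the worst case $d_i=s$.
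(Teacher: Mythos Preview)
Your proof is correct and follows the same overall structure as the paper's: reduce to bounding $\|c^{-1}wc\|$ with $w=a^{-1}b$, and show that each coordinate of the conjugate carries at least one factor controlled by $\|w\|$. The difference lies in how that factor is extracted. The paper expands $c^{-1}wc$ explicitly, writing $(c^{-1}wc)_i = w_i + Q_i(w,c) + Q_i(c^{-1},wc)$, and then uses the specific structure of $Q_i$ from Proposition~\ref{p-structure}\emph{(2)} (every term contains a factor $(x_jy_\ell - x_\ell y_j)$) to argue separately that $Q_i(w,c)$ and $Q_i(c^{-1},wc)$ each contain a linear factor $w_j$ with $j<i$. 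You replace this hands-on computation with two soft observations: the polynomial $(c,w)\mapsto (c^{-1}wc)_i$ vanishes at $w=0$, and conjugation commutes with $\delta_t$, forcing weighted-degree-$d_i$ homogeneity in the joint variables. This is cleaner and sidesteps the case analysis entirely, at the mild cost of being less explicit about \emph{which} $w_j$ appears. Both arguments then finish identically, absorbing the exponent gap between $1/d_i$ and $1/s$ into the constant via boundedness of $\|w\|$.
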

\begin{proof}
In this proof and the next, we will write $x \lesssim y$ for quantities $x,y \geq 0$ to indicate that there is a constant $C \geq 1$ depending only on $M$ and $\mathbb{G}$ such that $x \leq Cy$.

    Write $p=a^{-1}b$.
    We have that 
    \begin{align}
        \label{e-rightmult1}
        d(ac,bc) = \Vert c^{-1}pc \Vert = \max_{1\leq i \leq N} \mu_i \left| (c^{-1}pc)_i \right|^{1/d_i}.
    \end{align}
    Let us establish the structure of $c^{-1}pc$.
    First, note that
    \begin{align*}
        pc = \sum_{i=1}^n (p_i + c_i)e_i + \sum_{i=n+1}^N \left(p_i + c_i + Q_i(p,c) \right) e_i
    \end{align*}
    so that
    \begin{align*}
        c^{-1}pc = \sum_{i=1}^n p_i e_i + \sum_{i=n+1}^N \left(p_i + Q_i(p,c) + Q_i(c^{-1},pc) \right) e_i.
    \end{align*}
    According to Proposition~\ref{p-structure}{\em (2)}, every polynomial $Q_i(p,c)$ can be written as a sum of terms
    each of which contains a factor of the form $(p_j c_\ell - p_\ell c_j)$ for some $1 \leq j,\ell < i$.
    In particular, $Q_i(p,c)$ can be rewritten as a sum of terms each of which has a factor of $p_j$ for some $1 \leq j < i$.

    Similarly, we can write $Q_i(c^{-1},pc)$ as a sum of terms each of which contains a factor of the form 
    \begin{align*}
        -c_j (pc)_\ell + c_\ell (pc)_j
        &= 
        -c_j \left(p_\ell + c_\ell + Q_\ell (p,c)\right) + c_\ell \left(p_j + c_j + Q_j (p,c)\right)\\
        &=
        -c_j p_\ell -c_j Q_\ell (p,c) + c_\ell p_j + c_\ell Q_j (p,c).
    \end{align*}
    Again, then, $Q_i(c^{-1},pc)$ can be rewritten as a sum of terms each of which has a factor of $p_j$ for some $1 \leq j < i$.
    Therefore, we may conclude that
    \begin{align}
        \label{e-rightmult2} \left| (c^{-1}pc)_i \right|^{1/d_i} \lesssim \max_{1 \leq j \leq i} \left(|p_j|^{1/d_j}\right)^{d_j/d_i} 
        &\lesssim
        \Vert p \Vert^{1/s} 
    \end{align}
    since $d_j/d_i \geq 1/s$.
    Combining \eqref{e-rightmult1} and \eqref{e-rightmult2} completes the proof.
\end{proof}

\begin{cor}
\label{c-nice}
    Given $\alpha>0$ 
    and $a_1,\dots,a_{k_0},b_1,\dots,b_{k_0} \in \mathbb{G}$ satisfying $d(a_k,b_k) \leq \alpha$ for $k=1,\dots,{k_0}$
    and $\max_{i = 1,\dots,k_0} \{|a_i|,|b_i|\} \leq M$ for some $M > 0$,
    there is a constant $C>0$ depending only on $M$ and $\mathbb{G}$ such that
    $$
    d(a_{k_0}\cdots a_1, b_{k_0} \cdots b_1) \leq C \alpha^{1/s^{k_0 -1}}.
    $$
\end{cor}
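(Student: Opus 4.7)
The plan is to prove the estimate by induction on the number of factors $k$, with the corollary corresponding to $k = k_0$. For the base case $k = 1$ we simply have $d(a_1, b_1) \leq \alpha = \alpha^{1/s^0}$, so the claim holds with $C = 1$.

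For the inductive step, assume the statement for $k-1$ factors holds with some constant depending only on $M$ and $\mathbb{G}$. Applying the triangle inequality through the intermediate point $(a_k \cdots a_2) b_1$, we get
\begin{align*}
d(a_k \cdots a_1, b_k \cdots b_1) &\leq d\bigl((a_k \cdots a_2) a_1,\, (a_k \cdots a_2) b_1\bigr) \\
&\qquad + d\bigl((a_k \cdots a_2) b_1,\, (b_k \cdots b_2) b_1\bigr).
\end{align*}
The first summand equals $d(a_1, b_1) \leq \alpha$ by left invariance. For the second, Proposition~\ref{p-rightmult} with $c = b_1$ gives a bound of $C_1\, d(a_k \cdots a_2, b_k \cdots b_2)^{1/s}$, and the inductive hypothesis applied to the $k-1$ factors $a_2, \dots, a_k$ and $b_2, \dots, b_k$ bounds the base of this exponent by $C_2\, \alpha^{1/s^{k-2}}$. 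Raising to the $1/s$ and combining produces the estimate $\alpha + C_1 C_2^{1/s}\, \alpha^{1/s^{k-1}}$. Since the hypothesis $|a_i|, |b_i| \leq M$ together with Proposition~\ref{p-compact} forces $d(a_i, b_i)$ to be bounded in terms of $M$ and $\mathbb{G}$, we may assume $\alpha$ itself is so bounded, so the leading $\alpha$ is dominated by a constant multiple of $\alpha^{1/s^{k-1}}$, completing the inductive step.

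The main subtlety is ensuring that Proposition~\ref{p-rightmult} is applied with a constant depending only on $M$ and $\mathbb{G}$, not on the potentially larger norms of the partial products $a_k \cdots a_2$ and $b_k \cdots b_2$. Since $k_0$ depends only on $\mathbb{G}$ and the group law is polynomial (Proposition~\ref{p-structure}), a product of at most $k_0$ group elements of Euclidean norm at most $M$ has Euclidean norm bounded by an expression in $M$ depending only on $\mathbb{G}$. This uniformity lets us absorb the constants arising at each stage of the induction into a single $C = C(M, \mathbb{G})$, yielding the claim.
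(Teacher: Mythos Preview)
Your proof is correct and follows essentially the same approach as the paper's: induction on the number of factors, splitting via the triangle inequality into a term handled by left invariance and a term handled by Proposition~\ref{p-rightmult}. The only cosmetic difference is the choice of intermediate point: the paper peels off the leftmost factor (using $a_{k+1}b_k\cdots b_1$ as the midpoint, so left invariance gives the inductive hypothesis directly and Proposition~\ref{p-rightmult} must be applied $k$ times to strip $b_k\cdots b_1$), whereas you peel off the rightmost factor (so left invariance gives $d(a_1,b_1)\leq\alpha$ directly and a single application of Proposition~\ref{p-rightmult} reduces to the inductive hypothesis). Both routes yield the same exponent, and your explicit remark about bounding the Euclidean norms of partial products and about reducing to bounded $\alpha$ makes explicit something the paper leaves implicit.
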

\begin{proof}
    Note first that $d(a_1,b_1) \leq \alpha$.
    Suppose for some $k \in \{ 1, \dots, k_0 - 1\}$ that we have proven
    $$
    d(a_{k}\cdots a_1, b_{k} \cdots b_1) \lesssim \alpha^{1/s^{k-1}}.
    $$
    We then have
    \begin{align*}
        d(a_{k+1} &a_k \cdots a_1, b_{k+1} b_k \cdots b_1)\\
        &\leq
        d(a_{k+1}a_k\cdots a_1, a_{k+1}b_k \cdots b_1)
        +
        d(a_{k+1}b_k \cdots b_1, b_{k+1}b_k \cdots b_1)
        =: \mathbf{I} + \mathbf{II}.
    \end{align*}
    The left invariance of the metric provides the bound $\mathbf{I} \lesssim \alpha^{1/s^{k-1}} \lesssim \alpha^{1/s^k}$.
    Applying the previous proposition $k$ times gives
    $
    \mathbf{II} \lesssim d(a_{k+1},b_{k+1})^{1/s^k}
    \lesssim
    \alpha^{1/s^k}.
    $
    Repeating this process until it terminates completes the proof.
\end{proof}

\subsection{Pansu Differentiability}

    Suppose $\Omega \subset \mathbb{G}$ is open,
and consider $f:\Omega \to \hat{\mathbb{G}}$.
The {\em Pansu difference quotient of $f$ at $x \in \mathbb{G}$ in the direction of $\xi \in \partial B(0,1) \subset \mathbb{G}$} is defined for all $t>0$ as
$$
R(x,\xi;t) = \hat{\delta}_{1/t} \left( f(x)^{-1} f\left( x\delta_{t}(\xi) \right) \right).
$$
We say that $f$ is {\em Pansu differentiable at $x \in \Omega$} if there is some $z(x,\xi) \in \mathbb{G}$ 
such that, for all $\xi \in \partial B(0,1)$, the difference quotient $R(x,\xi;t)$ converges to $z(x,\xi)$ as $t \to 0$ and this convergence is uniform with respect to $\xi$.
When this limit exists, the map $\xi \mapsto z(x,\xi)$ is known to be a homeomorphism. See Proposition~3.2 in \cite{Pansu}.
We say that $f$ is {\em continuously Pansu differentiable} if the map $x \mapsto z(x,\cdot)$ is continuous.

\subsection{Modulus of continuity}

\begin{Def} \label{d-modcont}
    Suppose $\mathcal{A} \subset \mathbb{G}$ is compact and fix $f:A \to \hat{\mathbb{G}}$. 
    We say that $\omega:[0,\infty) \to [0,\infty)$ is a {\em Euclidean modulus of continuity} on $\mathcal{A}$ of $f$ if it is non-constant, continuous, concave, increasing, $\omega(0)=0$, and
$$
|f(x) - f(y)| \leq \omega(d(x,y))
\quad \text{ for all } x,y \in \mathcal{A}.
$$
\end{Def}
Any continuous function on a compact set yields a concave modulus of continuity.
It follows from the concavity of $\omega$ that $t \mapsto \omega(t)/t$ is a decreasing function. See, for example, Lemma~2.5 in \cite{SpeZimCmw}.
From this, it follows that $\frac{\omega(t_A)}{t_A} \leq \frac{\omega(t)}{t}$.
That is, there is a constant $c > 0$ depending only on $A$ and $\mathbb{G}$ such that $t \leq c \omega(t)$ for any $t \in [0,t_A]$.
Moreover, if $C \geq 1$ is a constant, then, for any $t>0$,
$\frac{\omega(Ct)}{Ct} \leq \frac{\omega(t)}{t}$
so that $\omega(Ct) \leq C \omega(t)$.
These facts will be used throughout the proofs below.

\section{Approximating smooth, horizontal curves with horizontal lines}
\label{s3}

We make the following observation about horizontal curves.
Given a curve $\gamma:[a,b] \to \mathbb{G}$ that is horizontal at $t \in [a,b]$, 
write $h(t) := (\gamma_1'(t),\dots,\gamma_n'(t),0,\dots,0)$.
Note that $\alpha h(t) = \delta_{\alpha}(h(t))$.
\begin{lem}
    \label{p-possess}
    Suppose $\gamma:[a,b] \to \mathbb{G}$ is a curve which is horizontal at $t \in [a,b]$. 
    Then, for each $i \in \{ n+1 , \dots,N\}$ and $\alpha > 0$, we have that
    $
    \alpha^{d_i}\gamma_i'(t)  
    $
    is a finite sum of terms each of which possesses 
    $$
    \alpha^{d_j+1}\gamma_j(t) h_\ell(t) - \alpha^{d_\ell + 1}\gamma_\ell (t) h_j(t)
    $$
    as a factor for some $1 \leq j,\ell < i$.
\end{lem}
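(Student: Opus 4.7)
The plan is to reduce the claim to an algebraic fact about the polynomials $Q_i$ from Proposition~\ref{p-structure}. Since $\gamma$ is horizontal at $t$ and $i > n$, equation~\eqref{e-horizniceintro} gives
\[
\gamma_i'(t) = \sum_{k=1}^n h_k(t)\,\left.\frac{\partial Q_i(\gamma(t),y)}{\partial y_k}\right|_{y=0},
\]
and the right-hand side is exactly the linear-in-$y$ part of the polynomial $Q_i(\gamma(t),y)$ evaluated at $y = h(t)$.

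The first main step is a structural unpacking of this linear-in-$y$ part. By Proposition~\ref{p-structure}(2), each term of $Q_i(x,y)$ has the form $c\,M(x,y)(x_j y_\ell - x_\ell y_j)$ with $1 \le j,\ell < i$, and since $(x_j y_\ell - x_\ell y_j)$ is itself linear in $y$, only the $y$-constant piece $M(x,0)$ can contribute to the linear-in-$y$ part (any other contribution from $M$ produces at least a quadratic $y$-power). Evaluating at $x = \gamma(t)$ and $y = h(t)$ therefore yields
\[
\gamma_i'(t) = \sum_\nu c_\nu\,M_\nu(\gamma(t),0)\bigl(\gamma_{j_\nu}(t)h_{\ell_\nu}(t) - \gamma_{\ell_\nu}(t)h_{j_\nu}(t)\bigr),
\]
so $\gamma_i'(t)$ is already a finite sum of terms each carrying a factor of the form $\gamma_j h_\ell - \gamma_\ell h_j$.

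The second main step is to install the $\alpha$-weights. I would invoke the dilation homogeneity $Q_i(\delta_\alpha x, \delta_\alpha y) = \alpha^{d_i}Q_i(x,y)$ from Proposition~\ref{p-structure}(3); differentiating in $y_k$ and setting $y = 0$ gives $\alpha^{d_i}\,\partial_{y_k} Q_i(x,0) = \alpha^{d_k}\,\partial_{y_k}Q_i(\delta_\alpha x, 0)$. Because $k \le n$ forces $d_k = 1$, multiplying the formula for $\gamma_i'(t)$ by $\alpha^{d_i}$ produces
\[
\alpha^{d_i}\gamma_i'(t) = \sum_{k=1}^n \alpha h_k(t)\,\frac{\partial Q_i}{\partial y_k}(\delta_\alpha\gamma(t),0),
\]
which is the linear-in-$y$ part of $Q_i$ evaluated at $(x,y) = (\delta_\alpha\gamma(t), \delta_\alpha h(t))$, using the identity $\alpha h = \delta_\alpha h$ (valid since $h$ lies in the first layer, where the dilation weight is $1$). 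Rerunning the decomposition of the previous paragraph with these dilated arguments makes each term carry the factor $(\delta_\alpha\gamma)_j(\delta_\alpha h)_\ell - (\delta_\alpha\gamma)_\ell(\delta_\alpha h)_j = \alpha^{d_j+1}\gamma_j h_\ell - \alpha^{d_\ell+1}\gamma_\ell h_j$, exactly as claimed. The only real obstacle is bookkeeping: keeping straight the three sources of $\alpha$-exponents — the distributed $\alpha^{d_i}$, the factor $\alpha^{d_i-d_k}$ absorbed by dilating the first argument, and the extra $\alpha$ contributed by $\delta_\alpha h = \alpha h$ — which together explain the ``$+1$'' appearing in each exponent of the claimed factor.
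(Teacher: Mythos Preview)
Your proof is correct and follows essentially the same route as the paper's: identify $\gamma_i'(t)$ with the linear-in-$y$ part $\bar Q_i(\gamma(t),h(t))$ of $Q_i$, use Proposition~\ref{p-structure}(2) to exhibit the $(x_j y_\ell - x_\ell y_j)$ factors, and then use the $d_i$-homogeneity of $Q_i$ (hence of $\bar Q_i$) together with $\delta_\alpha h = \alpha h$ to pass to the dilated arguments. Your observation that only the $y$-constant part $M(x,0)$ of each term contributes to the linear-in-$y$ piece is exactly the justification underlying the paper's one-line passage from $Q_i$ to $\bar Q_i$, and your derivation of the scaling identity by differentiating the homogeneity relation is just a slightly more explicit version of the paper's direct appeal to homogeneity of $\bar Q_i$.
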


\begin{proof}
    Fix $i \in \{ n+1 , \dots,N\}$.
    According to \eqref{e-horizniceintro}, we have
    \begin{equation}
        \label{e-horiznice}
    \gamma_i'(t) = \sum_{j=1}^n \gamma_j'(t) \left. \frac{\partial Q_i(\gamma(t),y)}{\partial y_j}\right|_{y=0}
    =
    \bar{Q}_i(\gamma(t),h(t))
    \end{equation}
    where $\bar{Q}_i$ is the finite sum of the terms in $Q_i(x,y)$ in which $y$ appears linearly. Proposition~\ref{p-structure}{\em (2)} implies that
    $\bar{Q}_i(x,y)$ is a finite sum of terms of the form 
    $$
    p_{j,\ell}(x)(x_j y_\ell - x_\ell y_j )
    $$
    for $1 \leq j,\ell < i$ where $p_{j,\ell}$ is a polynomial.
    The $d_i$-degree homogeneity of $Q_i$ (and thus of $\bar{Q}_i$) gives
    \begin{align*}
    \alpha^{d_i} \gamma_i'(t) 
    = 
    \bar{Q}_i(\delta_\alpha (\gamma(t)),\delta_\alpha (h(t)))
    =
    \bar{Q}_i(\delta_\alpha (\gamma(t)),\alpha h(t)),
    \end{align*}
    and this is then a finite sum of terms of the form
    $$
    p_{j,\ell}\left(\delta_\alpha (\gamma(t))\right)\left(\alpha^{d_j+1}\gamma_j(t) h_\ell(t) - \alpha^{d_\ell + 1}\gamma_\ell (t) h_j(t) \right).
    $$
\end{proof}


Suppose $f:[0,b] \to \mathbb{G}$ is a horizontal curve such that $f_i$ is Euclidean-differentiable at 0 for $i \leq n$.
Define the ``horizontal ray'' 
$$
L_f(t) := f(0) * \left( tf_1'(0),\dots,tf_n'(0),0\dots,0 \right)
\quad
\text{for } t \geq 0.
$$

The following is a generalization of Lemmas 3 and 4 and Corollary 5 
from \cite{Warhurst}.
This is the primary new tool in the proof of Theorem~\ref{e-main1} and is similar to approximations proven in Proposition~21 from \cite{CLZC1a} and Lemma~3.9 from \cite{ZimBanachDual}.
However, unlike in these cited results, we now no longer assume regularity of every component of the curve but only its horizontal ones. It turns out that this lower level of regularity is still enough to guarantee that the curve is well approximated by a horizontal ray.

\begin{lem}
    \label{l-approx}
    Suppose $t_0 \geq 0$ and $f:[0,t_0] \to \mathbb{G}$ is a horizontal curve such that $f_i$ is Euclidean-$C^{1}$ for $i \leq n$, and suppose that $|f_i'(t) - f_i'(0)| \leq \omega(t)$ for some Euclidean modulus of continuity $\omega$, all $t \in [0,t_0]$, and $i \leq n$. 
    Then there is a constant $C_1>0$ depending only on $\omega$, $\mathbb{G}$, $t_0$, and $|f_i'(0)|$ for $i \leq n$ such that, for every $t \in [0,t_0]$,
    \begin{equation}
        \label{e-closetoline}
        d(f(t),L_{f}(t)) \leq C_1 t \, \omega(t)^{1/s}.
    \end{equation}
    
\end{lem}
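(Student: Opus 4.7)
The plan is to first left-translate so that $f(0) = 0$ (left translation is an isometry for $d$) and then prove by induction on the dilation degree $d_i$ the pointwise estimate
\[
|f_i(\tau) - L_f(\tau)_i| \leq C\tau^{d_i}\omega(\tau) \qquad \text{for all } i \in \{1,\dots,N\} \text{ and } \tau \in [0,t_0].
\]
Here $\xi := (f_1'(0),\dots,f_n'(0),0,\dots,0)$, so $L_f(\tau) = \delta_\tau(\xi)$ and $L_f(\tau)_i = 0$ whenever $i>n$. The base case $d_i = 1$ is immediate: $|f_i(\tau) - \tau f_i'(0)| \leq \int_0^\tau \omega(u)\,du \leq \tau\omega(\tau)$. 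In particular, combined with $|L_{f,m}(\tau)| \leq C\tau^{d_m}$, this yields the auxiliary size bound $|f_m(\tau)| \leq C\tau^{d_m}$ for all $m$.

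For the inductive step with $d_i = k > 1$ (so $i>n$ and $L_f(\tau)_i = 0$), I would invoke the horizontal ODE \eqref{e-horizniceintro} to write $f_i'(\tau) = \bar Q_i(f(\tau),h(\tau))$ where $\bar Q_i$ is the $y$-linear part of $Q_i$ (as in the proof of Lemma~\ref{p-possess}) and $h(\tau) = (f_1'(\tau),\dots,f_n'(\tau),0,\dots,0)$. The key observation is the analogous identity for the horizontal ray: since $(L_f)_i' \equiv 0$ for $i>n$, one has $\bar Q_i(L_f(\tau),\xi) = 0$, and this cancellation is precisely what produces the $\omega$. Subtracting and using the $y$-linearity of $\bar Q_i$ gives
\[
f_i'(\tau) = \bar Q_i\bigl(f(\tau),\,h(\tau) - \xi\bigr) + \bigl(\bar Q_i(f(\tau),\xi) - \bar Q_i(L_f(\tau),\xi)\bigr).
\]
Writing $\bar Q_i(x,y) = \sum_{b \leq n} P_{i,b}(x)\,y_b$ with each $P_{i,b}$ homogeneous of dilation degree $k-1$, the first summand is bounded by $C\tau^{k-1}\omega(\tau)$ via $|h_b - \xi_b| \leq \omega(\tau)$ and the size bound on $f_m$. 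The second summand is controlled by a multivariate mean value argument combined with the inductive hypothesis $|f_m - L_{f,m}| \leq C\tau^{d_m}\omega(\tau)$ for $d_m < k$: each $\partial_{x_m}P_{i,b}$ is homogeneous of degree $k-1-d_m$, contributing $C\tau^{k-1-d_m}$, which combines with the induction to give $C\tau^{k-1}\omega(\tau)$. Integrating yields $|f_i(\tau)| = |f_i(\tau) - L_f(\tau)_i| \leq C\tau^k\omega(\tau)$, closing the induction.

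To conclude, I would translate this coordinatewise bound into the metric estimate on $\|L_f(t)^{-1}f(t)\|$. For $i \leq n$, $(L_f^{-1}f)_i = f_i - L_{f,i}$ is already controlled. For $i > n$, $(L_f^{-1}f)_i = f_i + Q_i(-L_f(t),f(t))$, and I would bound $|Q_i(-L_f,f)| \leq Ct^{d_i}\omega(t)$ by appealing to Proposition~\ref{p-structure}(2): each term contains a factor $(-L_{f,j})f_\ell - (-L_{f,\ell})f_j$, and a case analysis on which of $j,\ell$ lies in $\{1,\dots,n\}$ (using $|L_{f,k}| \leq Ct$ for $k\leq n$, $L_{f,k}=0$ otherwise, and the just-proved bound on $f_\ell$) shows each such factor, multiplied by the polynomial prefactor of dilation degree $d_i - d_j - d_\ell$, is of order $t^{d_i}\omega(t)$. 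Therefore $|(L_f^{-1}f)_i| \leq Ct^{d_i}\omega(t)$ for every $i$, giving
\[
d(f(t),L_f(t)) = \max_i \mu_i |(L_f^{-1}f)_i|^{1/d_i} \leq C t \max_i \omega(t)^{1/d_i} \leq C_1\, t\,\omega(t)^{1/s},
\]
using $d_i\leq s$ and the boundedness of $\omega$ on $[0,t_0]$ to absorb constants. The main obstacle I anticipate is the careful bookkeeping in the inductive step, especially verifying that the cancellation $\bar Q_i(L_f(\tau),\xi) = 0$ genuinely upgrades the pointwise $O(\tau^{k-1})$ bound on $\bar Q_i(f,h)$ to $O(\tau^{k-1}\omega(\tau))$ without losing a power of $\omega$ to the polynomial manipulations.
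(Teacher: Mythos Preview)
Your argument is correct and mirrors the paper's proof closely: translate to $f(0)=0$, handle $i\le n$ by integration, induct on the layer to obtain $|f_i(\tau)|\lesssim \tau^{d_i}\omega(\tau)$ for $i>n$, then bound $Q_i(L_f^{-1},f)$ by the same mechanism and read off the metric estimate. The only organizational difference is in the inductive step: the paper routes it through Lemma~\ref{p-possess}, writing $t^{1-d_i}f_i'$ as a sum of terms carrying explicit antisymmetric factors $t^{-d_j}f_j h_\ell - t^{-d_\ell}f_\ell h_j$ and case-splitting on whether $j,\ell\le n$, whereas you instead use the identity $\bar Q_i(L_f(\tau),\xi)=0$ (i.e.\ that the ray $L_f$ is itself horizontal) and telescope $\bar Q_i(f,h)$ against it. These are two packagings of the same cancellation; neither gains anything over the other.

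One small slip: the sentence asserting the size bound $|f_m(\tau)|\le C\tau^{d_m}$ \emph{for all $m$} immediately after the base case is premature---at that stage it is available only for $d_m=1$. This does no damage, since your inductive step only invokes the size bound for coordinates with $d_m<k$, where it follows from the inductive hypothesis $|f_m-L_{f,m}|\le C\tau^{d_m}\omega(\tau)$ together with $|L_{f,m}|\le C\tau^{d_m}$.
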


\begin{proof}
    Throughout this proof, we will write $x \lesssim y$ for quantities $x,y \geq 0$ to indicate that there is some constant $C \geq 1$ depending only on $\omega$, $\mathbb{G}$, $t_0$, and $|f_i'(0)|$ for $i \leq n$ such that $x \leq Cy$.
    

    Using the left invariance of the metric on $\mathbb{G}$, we may assume without loss of generality that
        $f(0) = 0$.
            Notice first that, for $i \in \{1,\dots,n\}$ and all $t \in [0,t_0]$,
    \begin{align}
        \label{e-goal1}
        |f_i(t) - tf_i'(0)| \leq
        \int_0^t |f_i'(\tau) - f_i'(0)| \, d\tau
        \leq t \, \omega(t).
    \end{align}
    Since $d(f(t),L_f(t)) = N_\infty( L_f(t)^{-1} * f(t))$,
    it remains to verify that 
    \begin{equation}
        \label{e-stepgoal}
        |f_i(t)| + |Q_i(L_f(t)^{-1},f(t))| \lesssim t^{d_i} \, \omega(t)
    \end{equation}
    for $i \in \{ n+1, \dots, N\}$.

    Fix such an index $i$, and choose $t \in [0,t_0]$ at which $f$ is horizontal.
    Suppose by way of induction that we have proven for all $n+1 \leq j < i$ that 
    \begin{equation}
        \label{e-indstep}
        \left| f_j(t) \right|
        \lesssim t^{d_j} \, \omega(t).
    \end{equation}
    According to Lemma~\ref{p-possess},
    we can write $t^{1-d_i}f_i'(t) = t \left((t^{-1})^{d_i}f_i'(t) \right)$ as a finite sum of terms each of which is a multiple of
    \begin{equation}
        \label{e-inductstep}
    t^{-d_j}f_j(t)h_\ell(t) - t^{-d_\ell}f_\ell (t) h_j(t)
    \end{equation}
    with $1 \leq j,\ell < i$.
    If $j > n$, we have $h_j(t) = 0$. In this case (or when $\ell > n$), 
    we may use \eqref{e-indstep} 
    and the fact that $|f_k'(t)| \leq |f_k'(0)| + \omega(t_0)$ for $k \leq n$
    to bound \eqref{e-inductstep} by a constant multiple of $\omega(t)$ with the constant depending only on $\omega$, $\mathbb{G}$, $t_0$, and $|f_k'(0)|$ for $k \leq n$.
    Otherwise, if $j,\ell \leq n$, we have $d_j=d_\ell=1$, so \eqref{e-inductstep} is bounded by
    \begin{align*}
        \tfrac{1}{t} \left| f_j(t) - tf_j'(0)\right| \left|f_\ell' (t)\right|
        +
        \tfrac{1}{t} \left| f_\ell(t) - t f_\ell'(0)\right|\left|f_j' (t)\right|
        +
        \left| f_j'(0) f_\ell'(t) - f_\ell '(0) f_j'(t)\right|.
    \end{align*}
    This last term is bounded by
    \begin{align*}
        \left| f_j'(0)\right|
        \left| f_\ell '(t) - f_\ell '(0)\right|
        +
        \left| f_\ell '(0)\right|
        \left| f_j '(t) - f_j '(0)\right|
        \lesssim \omega(t).
    \end{align*}
    This, \eqref{e-goal1}, and the argument above imply that
    $$
    \left| t^{-d_j}f_j(t)h_\ell(t) - t^{-d_\ell}f_\ell (t) h_j(t) \right|
        \lesssim
    \omega(t)
    $$
    for any $1 \leq j,\ell < i$,
    and hence $\left|t^{1-d_i}f_i'(t)\right| \lesssim
    \omega(t)$ for almost all $t \in [0,t_0]$.
    Therefore, 
    for all $t \in [0,t_0]$ and $i \in \{n+1, \dots ,N\}$,
    \begin{align}
    \label{e-f-bound}
        \left|f_i(t)\right|
        \leq
        t^{d_i - 1} \int_0^t \left|t^{1-d_i} f_i'(t) \right| \, dt
        \lesssim t^{d_i} \, \omega(t).
    \end{align}

    It remains to bound the second term in \eqref{e-stepgoal}, and the arguments here are similar.
    Again, choose $t \in [0,t_0]$ at which $f'(t)$ is horizontal and fix $i \in \{n+1,\dots,N\}$.
    Proposition~\ref{p-structure} implies that 
    $$
    t^{-d_i}Q_i\left(L_f(t)^{-1},f(t)\right) = Q_i\left(\delta_{1/t}(L_f(t)^{-1}),\delta_{1/t}(f(t))\right),
    $$
    and this polynomial is a sum of terms each of which contains a factor of the form
    $$
    t^{-d_j}f_j(t)h_\ell(0) - t^{-d_\ell}f_\ell (t) h_j(0)
    $$
    for $1 \leq j ,\ell < i$.
    This quantity is bounded from above by
        \begin{align*}
        t^{-d_j}\left| f_j(t) - t^{d_j}h_j(0)\right| \left|t^{-d_\ell}f_\ell (t)\right|        +
        t^{-d_\ell}\left| f_\ell(t) - t^{d_\ell}h_\ell(0)\right| \left|t^{-d_j}f_j (t)\right|
    \end{align*}
    Note that \eqref{e-goal1} implies $\left| f_j(t) \right| \leq t \left(\omega(1)+\left|f_j'(0)\right|\right)$ when $j \leq n$.
    This together with \eqref{e-f-bound} gives
    $$
    \left| Q_i\left(L_f(t)^{-1},f(t)\right) \right| 
    \lesssim
    t^{d_i} \, \omega(t) 
    $$
    which completes estimate \eqref{e-stepgoal} and hence the proof.
\end{proof}

\section{Proof of the main theorem} \label{s4}

As in Remark~\ref{r-strat}, there are constants $k_0$ and $C_0$ depending only on $\mathbb{G}$ such that,
for any $\xi \in K := \partial B(0,1)$,
we can write $\xi = \xi_{k_0} \cdots \xi_{1}$ 
where each $\xi_k = \lambda_k e_{j_k}$ for some $j_k \in \{ 1,\dots,n\}$ and $\lambda_k \in \mathbb{R}$ with $|\lambda_k| \leq C_0 |\xi|$.

For any such $\xi$ and any $x \in \mathbb{G}$, we define
\begin{align*}
    D_{\xi_k} f(x) = \left( \left. \tfrac{d}{d\tau} f_1(x \delta_\tau(\xi_k)) \right|_{\tau=0},\dots,\left. \tfrac{d}{d\tau} f_{\hat{n}}(x \delta_\tau(\xi_k)) \right|_{\tau=0},0,\dots,0 \right)
\end{align*}
and
$$
z(x,\xi) = D_{\xi_{k_0}}f(x) D_{\xi_{k_0 -1}}f(x) \cdots D_{\xi_1}f(x).
$$
Recall also that, given $A \subset \Omega \subset \mathbb{G}$, we define $t_A = \frac{\text{dist}(A, \mathbb{G} \setminus \Omega)}{2k_0 C_0\sup_{ \Vert \xi \Vert = 1} |\xi|}$.

We restate the quantitative version of Theorem~\ref{t-actualmain} here for convenience.
In the following, we will use $d$, $\delta$, $N$, and $n$ and $\hat{d}$, $\hat{\delta}$, $\hat{N}$, and $\hat{n}$ to denote the metrics on, dilations on, topological dimensions of, and horizontal dimensions of $\mathbb{G}$ and $\hat{\mathbb{G}}$ respectively.
\begin{thm}
    Suppose $\Omega \subset \mathbb{G}$ is open. If $f: \Omega \to \hat{\mathbb{G}}$ preserves horizontal curves and $f_i$ is Euclidean-$C^1$ for $i \leq \hat{n}$, then,
    for any compact set $A \subset \Omega$, there is a constant $C > 0$ satisfying the following:
    for every $x \in A$ and $\xi \in \partial B(0,1)$,
    \begin{equation}
        \label{e-main3}
        \hat{d}(R(x,\xi;t),z(x,\xi)) \leq C\omega(t)^{1/s^{k_0}}
    \end{equation}
    for $t \in \left(0,t_A\right]$.
\end{thm}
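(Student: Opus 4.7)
The plan is to build $R(x,\xi;t)$ out of $k_0$ ``coordinate direction'' pieces, apply Lemma~\ref{l-approx} once per piece, and then reassemble via Corollary~\ref{c-nice}. Fix $x \in A$, $\xi \in \partial B(0,1)$, and $t \in (0, t_A]$, and use Remark~\ref{r-strat} to write $\xi = \xi_{k_0}\cdots\xi_1$ with $\xi_m = \lambda_m e_{j_m}$ and $|\lambda_m| \leq C_0$. Define $y_0 := x$ and $y_k := y_{k-1}\,\delta_t(\xi_{k_0-k+1})$, so $y_{k_0} = x\delta_t(\xi)$; the choice of $t_A$ guarantees each $y_k \in \mathcal A \subset \Omega$. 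Since $\hat\delta_{1/t}$ is a group homomorphism on $\hat{\mathbb G}$, telescoping gives
\[
R(x,\xi;t) = \hat\delta_{1/t}\bigl(f(y_0)^{-1}f(y_{k_0})\bigr) = q_1 q_2 \cdots q_{k_0}, \qquad q_k := \hat\delta_{1/t}\bigl(f(y_{k-1})^{-1}f(y_k)\bigr).
\]

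Setting $m := k_0 - k + 1$, I would next show $\hat d(q_k, D_{\xi_m}f(x)) \lesssim \omega(t)^{1/s}$. The curve $\gamma_k(\tau) := y_{k-1}\delta_\tau(\xi_m)$ on $[0, t]$ is horizontal with $\gamma_k'(\tau) = \lambda_m X_{j_m}(\gamma_k(\tau))$, so $f\circ\gamma_k$ is horizontal in $\hat{\mathbb G}$, and its horizontal components satisfy $(f_i\circ\gamma_k)'(\tau) = \lambda_m \nabla f_i(\gamma_k(\tau)) \cdot X_{j_m}(\gamma_k(\tau))$. Combining the modulus $\omega$ of $\nabla f_i$, the Lipschitz behaviour on $\mathcal A$ of the polynomial vector field $X_{j_m}$, and the inequality $\tau \leq c\,\omega(\tau)$ on $[0, t_A]$, one extracts a modulus $\tilde\omega \lesssim \omega$ for $(f_i\circ\gamma_k)'$ uniform in the relevant data. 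Since the horizontal part of $(f \circ \gamma_k)'(0)$ is exactly $D_{\xi_m}f(y_{k-1})$, Lemma~\ref{l-approx} yields
\[
\hat d\bigl(f(y_k),\; f(y_{k-1}) \cdot \hat\delta_t(D_{\xi_m}f(y_{k-1}))\bigr) \lesssim t\,\omega(t)^{1/s},
\]
which, via left invariance and homogeneity of $\hat d$ together with $\hat\delta_{1/t}$ being a homomorphism, rearranges to $\hat d(q_k, D_{\xi_m}f(y_{k-1})) \lesssim \omega(t)^{1/s}$. To replace $y_{k-1}$ by $x$, I would use that $D_{\xi_m}f(\cdot)$ has only horizontal components so that Proposition~\ref{p-structure}\emph{(2)} reduces $\hat d(D_{\xi_m}f(y_{k-1}), D_{\xi_m}f(x))$ to (powers of) the Euclidean distance of horizontal coordinates; combined with the modulus $\omega$ of $\nabla f_i$ and $d(y_{k-1}, x) \lesssim t$, this gives the same bound $\omega(t)^{1/s}$.

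Reindexing by $\tilde q_k := q_{k_0-k+1}$ and $\tilde b_k := D_{\xi_k}f(x)$, the previous step reads $\hat d(\tilde q_k, \tilde b_k) \lesssim \omega(t)^{1/s}$, while $\tilde q_{k_0}\cdots \tilde q_1 = q_1\cdots q_{k_0} = R(x,\xi;t)$ and $\tilde b_{k_0}\cdots \tilde b_1 = D_{\xi_{k_0}}f(x)\cdots D_{\xi_1}f(x) = z(x,\xi)$. Applying Corollary~\ref{c-nice} in $\hat{\mathbb G}$ (the $|\tilde q_k|$ and $|\tilde b_k|$ are uniformly bounded thanks to the compactness of $A$, the bound on $\lambda_m$, and the continuity of $\nabla f_i$ and $X_{j_m}$ on $\mathcal A$) then gives
\[
\hat d(R(x,\xi;t), z(x,\xi)) \lesssim \bigl(\omega(t)^{1/s}\bigr)^{1/s^{k_0 - 1}} = \omega(t)^{1/s^{k_0}},
\]
as desired.

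The step I expect to be the main obstacle is the second: distilling a genuine modulus of continuity for $(f\circ\gamma_k)'$ from the modulus $\omega$ of each $\nabla f_i$, with constants depending only on $A$, $\mathbb G$, and $\hat{\mathbb G}$ and not on $(x, \xi, k, t)$. Securing this uniformity needs careful use of the compactness of $A$ and $\partial B(0,1)$, the polynomial nature of $X_{j_m}$, and the bound $t \leq t_A$ (so that every $y_k$ stays in $\mathcal A$ and $\tau \leq c\,\omega(\tau)$ is available on $[0,t_A]$). Once that is in hand, the rest is a mechanical combination of Lemma~\ref{l-approx}, the homogeneity of $\hat\delta$, and Corollary~\ref{c-nice}.
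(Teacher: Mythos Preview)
Your proposal is correct and follows essentially the same route as the paper: decompose $R(x,\xi;t)$ via Pansu's trick into $k_0$ pieces $R(x_m(t),\xi_m;t)$, apply Lemma~\ref{l-approx} to each piece to get $\hat d(R(x_m(t),\xi_m;t),D_{\xi_m}f(x_m(t)))\lesssim\omega(t)^{1/s}$, compare $D_{\xi_m}f(x_m(t))$ with $D_{\xi_m}f(x)$ using the modulus of $\nabla f_i$ and Proposition~\ref{p-compact}, and reassemble with Corollary~\ref{c-nice}. The only organizational difference is that the paper introduces an intermediate product $z_0(x,\xi;t)=\prod_k D_{\xi_k}f(x_k(t))$ and applies Corollary~\ref{c-nice} twice (once for $\hat d(R,z_0)$ and once for $\hat d(z_0,z)$), whereas you first add the two factor-wise bounds by the triangle inequality and then invoke Corollary~\ref{c-nice} once; both yield the same exponent $1/s^{k_0}$.
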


\begin{proof}
    Throughout this proof, we will write $a \lesssim b$ for quantities $a,b \geq 0$ to indicate that there is some constant $C \geq 1$ depending only on $\mathbb{G}$, $A$, $\Omega$, and $f$ such that $a \leq Cb$.

We begin by verifying the equality in \eqref{e-otherform}.
Note first that, for any $x \in \mathbb{G}$,
\begin{align*}
    \left. \tfrac{d}{d\tau} x \delta_\tau(\xi_k) \right|_{\tau=0}
    =
     \left. \tfrac{d}{d\tau} L_x (\tau \lambda_k e_{j_k})
    \right|_{\tau=0}
    =
     d L_x \left( \lambda_k \partial_{j_k}    \right)
     =
     \lambda_k X_{j_k}(x)
\end{align*}
where $L_x(p) := xp$.
Thus,
given $f: \mathbb{G} \supset \Omega\to \hat{\mathbb{G}}$ such that $f_i$ is Euclidean-differentiable at $x \in \Omega$ for $i \leq \hat{n}$, we have
\begin{align*}
    D_{\xi_i} f(x) =\lambda_i \left( \nabla f_1(x) \cdot X_{j_i}(x), \dots, \nabla f_{\hat{n}}(x) \cdot X_{j_i}(x),0,\dots,0 \right).
\end{align*}

Now fix $f: \mathbb{G} \supset \Omega\to \hat{\mathbb{G}}$ and assume that $f_i$ is Euclidean-$C^1$ for $i \leq \hat{n}$.
 Fix a compact set $A \subset \Omega$ and choose a function $\omega$ which is a Euclidean modulus of continuity on $\mathcal{A}$ for each $\nabla f_i$ with $i \leq \hat{n}$.
To justify our definition of $z(x,\xi)$, we first recall ``Pansu's trick'' for the difference quotient \cite{Pansu}:
given $x \in A$, $p,q \in K$, and $t \in [0,t_A]$, we have
\begin{align*}
    R(x,pq;t) &= 
    \hat{\delta}_{1/t} \left( f(x)^{-1} f\left( x\delta_{t}(pq) \right) \right)\\
    &= 
    \hat{\delta}_{1/t} \left( f(x)^{-1} f(x \delta_t(p))\right) \hat{\delta}_{1/t} \left(f(x \delta_t(p))^{-1}f\left( x\delta_{t}(p) \delta_{t}(q)\right) \right)\\
    &= R(x,p;t)R(x \delta_t(p),q;t).
\end{align*}
Applying this trick $k_0-1$ times, we have for any $x \in A$ that
\begin{align}
    R(x,\xi;t) 
    =
    R(x,\xi_{k_0} \cdots \xi_1;t) 
    &= R(x,\xi_{k_0};t)R(x \delta_t(\xi_{k_0}),\xi_{k_0-1}\cdots \xi_1;t) \nonumber \\
    &= \dots \nonumber \\
    &= R(x_{k_0},\xi_{k_0};t) R(x_{k_0-1}(t),\xi_{k_0-1};t) \cdots R(x_1(t),\xi_1;t) \label{e-deconstruct}
\end{align}
where $x_{k_0}(t) \equiv x$ and 
\begin{align*}
x_k(t) 
:= 
x_{k+1} \delta_t(\xi_{k+1})
=x \delta_t(\xi_{k_0}\xi_{k_0-1}\cdots\xi_{k+1})
\end{align*}
for $k = 1, \dots, {k_0}-1$.
The left invariance and homogeneity of $d$ together with the triangle inequality imply that $x_k(t) \in \mathcal{A} \subset \Omega$ for each $t \in [0,t_A]$.
Indeed,
\begin{align}
    d(x,x_k(t))
    =
    td(0,\xi_{k_0} \cdots \xi_{k+1})
    &\leq
    t(d(0,\xi_{k_0}) + \cdots + d(0,\xi_{k+1})) \nonumber \\
    &\leq t ( |\lambda_{k_0}| + \cdots | \lambda_{k+1}|) 
    \leq t(k_0-1)C_0|\xi|.  \label{e-insideOmega}
\end{align}

For each $t \in [0,t_A]$,
define 
$$
z_0(x,\xi;t) := D_{k_0}f(x) * D_{k_0 -1}f(x_{k_0-1} (t)) * \cdots * D_{1}f(x_1 (t)).
$$

We will complete the proof by verifying that 
\begin{equation}
    \label{e-finalgoal}
    \hat{d}(z(x,\xi),z_0(x,\xi;t)) \lesssim \omega(t)^{1/s^{k_0}}
\end{equation}
and
\begin{equation}
    \label{e-finalgoal2}
    \hat{d}(R(x,\xi;t),z_0(x,\xi;t)) \lesssim \omega(t)^{1/s^{k_0}}
\end{equation}
for all $x \in A$, $\xi \in K$, and $t \in [0,t_A]$.

We will first attack estimate \eqref{e-finalgoal}.
In view of Corollary~\ref{c-nice},
we will prove the following claim.

\noindent \textbf{Claim:}
There is a constant $C_1>0$ such that, for all $x \in A$ and $k \in \{1,\dots,k_0\}$,  
\begin{align}
    \label{e-approxinproof0}    
        \hat{d}(D_kf(x),D_kf(x_k(t))) \leq C_1 \omega(t)^{1/s}
\end{align}
for $t \in [0,t_A]$.
\begin{proof}[Proof of Claim]
By \eqref{e-insideOmega}, we have
$x_k(t) \in \mathcal{A}$, and so for $i \leq \hat{n}$ we get
\begin{align*}
&\left| \left.\tfrac{d}{d\tau} f_i(x \delta_\tau(\xi_k))\right|_{\tau=0} - \left.\tfrac{d}{d\tau} f_i(x_k(t)  \delta_\tau(\xi_k))\right|_{\tau=0} \right|\\
& \qquad  \qquad \qquad
=
| \nabla f_i(x) \cdot  
\lambda_k X_{j_k}(x)
-
\nabla f_i(x_k(t)) \cdot \lambda_k X_{j_k}(x_k(t))|\\
& \qquad  \qquad \qquad
\lesssim
|\nabla f_i(x)- \nabla f_i(x_k(t))| |X_{j_k}(x)|
+ |\nabla f_i(x)||X_{j_k}(x) - X_{j_k}(x_k(t))|\\
& \qquad  \qquad \qquad
\lesssim
\omega(t) |X_{j_k}(x)|
+ t |\nabla f_i(x)|.
\end{align*}
In the last inequality, we applied Proposition~\ref{p-compact} and \eqref{e-insideOmega}, and we utilized the smoothness of $X_{j_k}$.
Therefore, according to Proposition~\ref{p-compact},
$$
\hat{d}(D_kf(x),D_kf(x_k(t)))
\lesssim
|D_kf(x) - D_kf(x_k(t))|^{1/s} \lesssim \omega(t)^{1/s}.
$$
\end{proof}

Now, for $k \in \{1,\dots,k_0\}$, write $a_k = D_k f(x)$ and $b_k = D_k f(x_k(t))$.
Note that each $|a_k|$ and each $|b_k|$ is bounded by a constant depending only on $A$, $f$, $\Omega$, and $\mathbb{G}$ according to $\eqref{e-otherform}$ and the fact that $x_k(t) \in \mathcal{A}$.
Hence, the above claim applied to Corollary~\ref{c-nice} gives the bound
\begin{align*}
    \hat{d} \left( z(x,\xi),z_0(x,\xi;t) \right)
    =
    \hat{d}(a_{k_0} \cdots a_1,b_{k_0} \cdots b_1)
    \lesssim
    \omega(t)^{1/s^{k_0}}.
\end{align*}
This is estimate \eqref{e-finalgoal}.

It remains to bound $\hat{d}(R(x,\xi;t),z_0(x,\xi;t))$ for any  $x \in A$ and $\xi \in K$.
Recall that 
$t \mapsto x \delta_t(\xi_k)$ is a horizontal $C^\infty$ curve in $\mathbb{G}$ for each fixed $x \in \mathbb{G}$.
Hence
our assumptions on $f$ imply that the curve
$\eta:[0,t_A] \to \hat{\mathbb{G}}$ defined as
$\eta(t) = f(x \delta_t(\xi_k))$ is a horizontal curve in $\hat{\mathbb{G}}$, and $\eta_i$ is Euclidean-$C^1$ for $i \leq \hat{n}$.
We would like to apply Lemma~\ref{l-approx} to this curve.
Following our prior notation,
denote the ``horizontal ray'' in $\hat{\mathbb{G}}$ approximating $\eta$ by
\begin{align*}
L_{x,k}(t) := L_\eta(t) &= \eta(0) * \left( t \eta_1'(0),\dots,t \eta_{\hat{n}}'(0),0,\dots,0\right)\\
&= 
f(x) * \left( t\left. \tfrac{d}{d\tau} f_1(x \delta_\tau(\xi_k)) \right|_{\tau=0},\dots,t\left. \tfrac{d}{d\tau} f_{\hat{n}}(x \delta_\tau(\xi_k)) \right|_{\tau=0},0,\dots,0 \right)\\
&= f(x) * \hat{\delta}_t (D_kf(x)).
\end{align*}
We then have for any $x \in A$ that 
\begin{align}
    \hat{d} \left( R(x,\xi_k;t),D_kf(x)\right)
    &=
    \hat{d} \left( \hat{\delta}_{1/t} \left( f(x)^{-1} f\left( x\delta_{t}(\xi_k) \right) \right), 
    \hat{\delta}_{1/t} \left(f(x)^{-1} L_{x,k}(t) \right)\right) \label{e-abbound}\\
    &=
    \tfrac{1}{t} \hat{d}  \left(  f\left( x\delta_{t}(\xi_k) \right) , 
    L_{x,k}(t)  \right). \nonumber
\end{align}

\noindent \textbf{Claim:}
There is a constant $C_2>0$ depending only on $f$, $A$, $\Omega$, and $\mathbb{G}$ such that, for all $x \in A$, $k \in \{1,\dots,k_0\}$, and $t \in [0,t_A]$, 
\begin{align}
    \label{e-approxinproof}    
        \hat{d}(f(x_k(t) \delta_t(\xi_k)),L_{x_k(t),k}(t)) 
        \leq C_2 t \, \omega(t)^{1/s}.
\end{align}
Before proving this claim, let us see how it can be used to complete the proof of the theorem.
Estimates \eqref{e-abbound} and \eqref{e-approxinproof} together give
\begin{align}
\label{e-secondbound}
    \hat{d} \left( R(x_k(t),\xi_k;t),D_kf(x_k(t))\right) \leq C_2 \omega(t)^{1/s}
\end{align}
for all $x \in A$, $k \in \{1,\dots,k_0\}$, and $t \in [0,t_A]$.
We will now use Corollary~\ref{c-nice} 
to conclude \eqref{e-finalgoal2}.
Fix $x \in A$,
and, for each $k \in \{1,\dots,k_0\}$, write $a_k = R(x_k(t),\xi_k;t)$ and $b_k = D_k f(x_k(t))$.
We may bound $|b_k|$ as before, and \eqref{e-secondbound} then provides a bound on $|a_k|$ depending only on $f$, $A$, $\Omega$, and $\mathbb{G}$.
Therefore, we can apply \eqref{e-secondbound} and Corollary~\ref{c-nice} to conclude that
\begin{align*}
    \hat{d} \left( R(x,\xi;t),z_0(x,\xi;t) \right)
    =
    \hat{d}(a_{k_0} \cdots a_1,b_{k_0} \cdots b_1)
    \lesssim
    \omega(t)^{1/s^{k_0}}.
\end{align*}
This is the required bound to complete the proof.

\begin{proof}[Proof of Claim]
Fix $t \in [0,t_A]$.
Write $z= x_k(t)$, and set $\eta(\kappa) = f(z \delta_\kappa(\xi_k))$ for $\kappa \in [0,t_A]$.
We want to apply Lemma~\ref{l-approx}. In order to do so, it remains to prove that 
\begin{equation}
    \label{e-claimgoal}
    |\eta_i'(\kappa) - \eta_i'(0)| \lesssim \omega(\kappa)
\end{equation}
for all $\kappa \in [0,t_A]$ and $i \leq \hat{n}$.
As before, for such $i$,
\begin{align*}
    \eta_i'(\kappa)
    =
    \left. \tfrac{d}{d\tau} f_i(z \delta_\tau(\xi_k)) \right|_{\tau=\kappa}
    =
     \nabla f_i(z \delta_{\kappa}(\xi_k)) \cdot X_{j_k}(z)
\end{align*}
so that
\begin{align*}
    |\eta_i'(\kappa)-\eta_i'(0)| 
    &=
    \left| \nabla f_i(z \delta_{\kappa}(\xi_k)) \cdot X_{j_k}(z)
    -
    \nabla f_i(z) \cdot X_{j_k}(z)
    \right|\\
    &\leq 
\left| \nabla f_i(z \delta_{\kappa}(\xi_k))
    -
    \nabla f_i(z)
    \right|
    |X_{j_k}(z)|.
\end{align*}
Note that $z \delta_{\kappa}(\xi_k) \in \mathcal{A}$, so we have \eqref{e-claimgoal} by the definition of $\omega$.
Applying Lemma~\ref{l-approx} proves 
$$
\hat{d}(f(x_k(t) \delta_\kappa(\xi_k)),L_{x_k(t),k}(\kappa)) 
        \lesssim \kappa \, \omega(\kappa)^{1/s}
$$
for all $\kappa \in [0,t_A]$. Choosing specifically $\kappa = t$ completes the proof of the claim.
\end{proof}

This establishes \eqref{e-finalgoal2} and proves the theorem.
\end{proof}

\bibliographystyle{alpha}
\bibliography{bibliography}
\end{document}